\theoremstyle{plain}
\newtheorem{theorem}{Theorem}
\newtheorem{proposition}[theorem]{Proposition}
\begin{document}

\title{Construction of an ordinary Dirichlet series with convergence beyond the Bohr strip }
\author{Brian N. Maurizi \\ 20 Walters Avenue \\ Staten Islnad, NY 10301 \\ brian.maurizi@gmail.com}
\subjclass[2010]{11M41, 30B50}
\keywords{Dirichlet series, Hille Bohnenblust, abscissa, Bohr strip, conditional convergence}

\date{} 

\maketitle

\begin{abstract}

An ordinary Dirichlet series has three abscissae of interest, describing the maximal regions where the Dirichlet series converges, converges uniformly, and converges absolutely.  The paper of Hille and Bohnenblust in 1931, regarding the region on which a Dirichlet series can converge uniformly but not absolutely, has prompted much investigation into this region, the ``Bohr strip."  However, a related natural question has apparently gone unanswered: For a Dirichlet series with non-trivial Bohr strip, how far beyond the Bohr strip might the series converge?  We investigate this question by explicit construction, creating Dirichlet series which converge beyond their Bohr strip.

\end{abstract}

\section{Introduction}

An ordinary Dirichlet series is a function of the form
$$
f(s) = \sum_{n=1}^{\infty} a_n n^{-s}
$$
with $s = \sigma + i t \in \mathbb{C}$.  The region on which a Dirichlet series might be expected to converge is a right half plane, we denote these by
$$
\Omega_{\sigma} = \{ s \in \mathbb{C} : \Re{s} > \sigma \}
$$
(where $\Re$ denotes the real part) and its closure will be written $\overline{\Omega}_{\sigma}$.  To a Dirichlet series we can associate several abscissae:
\begin{align}
\sigma_a &= \inf \{ \sigma : \sum a_n n^{-s} \text{ converges absolutely for } s \in \Omega_\sigma \}  \nonumber \\
\sigma_b &= \inf \{ \sigma : \sum a_n n^{-s} \text{ converges to a bounded function on } \Omega_\sigma \}  \nonumber \\
\sigma_c &= \inf \{ \sigma : \sum a_n n^{-s} \text{ converges for all } s \in \Omega_\sigma \}  \; . \nonumber
\end{align}
From the definitions, it is evident that $\sigma_c \le \sigma_b \le \sigma_a$.  Harald Bohr proved that $\sigma_a - \sigma_b \le 1/2$ in \cite{Bohr_1913_2}.  In 1931, Hille and Bohnenblust \cite{Hille_Bohnenblust} showed that this is sharp; there exist Dirichlet series for which $\sigma_a - \sigma_b = 1/2$, and an explicit construction is provided in \cite{Hille_Bohnenblust}.

Let $\Lambda(n)$ be the number of prime factors of $n \in \mathbb{N}$, counted with multiplicity (so $\Lambda(8) = 3$).  In \cite{Hille_Bohnenblust} they also show that if $\sum a_n n^{-s}$ contains only terms of homogeneity at most $M$, i.e.
$$
\Lambda(n) > M \; \implies \; a_n = 0
$$
then we have $\sigma_a - \sigma_b \le \frac{1}{2} - \frac{1}{2M}$, and this is also sharp, which is shown by construction.

Since the publication of \cite{Hille_Bohnenblust}, there has been much investigation into the gap $\sigma_a - \sigma_b$, and the associated ``Bohr strip" $\{ s \in \mathbb{C} : \sigma_b < \Re{s} < \sigma_a \}$ and related issues, we recall some of them here.  We would like to mention a survey article in this area by Defant and Schwarting \cite{Defant_Survey}.

A key inequality in \cite{Hille_Bohnenblust} is the following inequality for $M$-homogenous polynomials:  For each $M$, there is a constant $D_M$ such that, for an $M$-homogenous polynomial $\sum_{|\alpha| = M} a_{\alpha} z^{\alpha}$ on $\mathbb{C}^n$ we have
\begin{equation}\label{E:M_homogenous_poly_estimate}
\left(  \sum_{|\alpha| = M} | a_{\alpha}|^{ \frac{2M}{M+1} } \right)^{ \frac{M+1}{2M}  } \le D_M \sup_{z \in \mathbb{D}^n } \left| \sum_{|\alpha| = M} a_{\alpha} z^{\alpha} \right|
\end{equation}
where $\alpha = (\alpha_1 , \ldots , \alpha_n)$ is a multi-index, $|\alpha| = \alpha_1 + \cdots + \alpha_n$, and $\mathbb{D}$ is the unit disc.  This is a generalization of the Littlewood 4/3 inequality \cite{Littlewood}, which proves the above result in the case $M=2$.  The original proof in \cite{Hille_Bohnenblust} did give a bound on the best possible $D_M$, but this bound has been substantially improved, see the discussions in \cite{Defant_et_al_Hypercontractive} and \cite{Diniz_et_al} which also contain the most recent improvements to our knowledge.  

Another development related to the question of the gap $\sigma_a - \sigma_b$ is the theory of $p$-Sidon sets (see the discussions in \cite{Queffelec_OldAndNew}, \cite{Defant_et_al_Hypercontractive}), the inequality (\ref{E:M_homogenous_poly_estimate}) shows that the set of monomials $\{ z^{\alpha} : |\alpha| = M \}$ is a $\frac{2M}{M+1}$-Sidon set, for example.

To produce Dirichlet series with a large gap $\sigma_a - \sigma_b$, in addition to the explicit construction of \cite{Hille_Bohnenblust}, random methods have been employed, such as in \cite{hartman} where the existence of ordinary Dirichlet series with $\sigma_a - \sigma_b = 1/2$, (and $\sigma_a - \sigma_b = \frac{1}{2} - \frac{1}{2M}$ for homogeneity $M$), is shown.  Random methods are also employed in sections 4 and 5 of \cite{Queffelec_OldAndNew}, to construct Dirichlet polynomials with small $\| \cdot \|_{\infty}$ norm and thus obtain bounds on the $1$-Sidon constant of the set of ``frequencies" $\{ \log 1, \ldots , \log N\}$ (interpreted as functions on the Bohr compactification of $\mathbb{R}$).

We mention these developments to note that, for all of this progress, a rather natural question remains: For a Dirichlet series with the ``gap'' $\sigma_a - \sigma_b$ being large, what can be said about the ``gap'' $\sigma_b - \sigma_c$ for this same series?  To what extent can a Dirichlet series converge beyond its Bohr strip?  This is the question we explore here.

We first present a general construction of an ordinary Dirichlet series of homogeneity $M$.  Our technique is based on the method of Walsh matrices used in \cite{Maurizi_Queffelec}, although we depart from \cite{Maurizi_Queffelec} by using non-square matrices.  We then prove four bounds on the abscissae of this series, of the following forms:
\begin{itemize}
\item{ $\sigma_b \le B$}
\item{ $ \sigma_b \ge 0$}
\item{ $\sigma_a \ge A$}
\item{ $\sigma_c \le C$} \; .
\end{itemize}
The construction only yields a non-trivial result (i.e. $\sigma_a - \sigma_b > 0 \; , \; \;\; \sigma_b - \sigma_c > 0 $)  for the cases $M=2,3$.  We present the construction for general $M$ nevertheless, because the exposition would not be much clearer for $M=3$ rather than general $M$, and because we hope that better bounds might be proved for the general construction which would then yield results beyond $M=3$.  For the cases $M=2,3$, we obtain:

\subsection*{$M=2$}

We construct a Dirichlet series of homogeneity $M=2$ satisfying
\begin{equation}\label{E:Mequals2_gap}
\sigma_a - \sigma_b = 1/4 \; , \;\;\; \sigma_b - \sigma_c \ge 1/4 \; .
\end{equation}
Construction of such a Dirichlet series (or even proof of its existence) is, to our knowledge, a new result.  Note that $1/4$ is the optimal value of $\sigma_a - \sigma_b$, given that $M=2$.

\subsection*{$M=3$}

Here, for any value $\rho_1 \in (0,1)$, we construct a Dirichlet series of homogeneity $M=3$ which satisfies $\sigma_a - \sigma_c \ge 1/3$, and we furthermore have some specific control over $\sigma_b \in (\sigma_c , \sigma_a)$:
$$
\sigma_a - \sigma_b \ge \frac{1+\rho_1}{6}   \; , \;\;\; \sigma_b - \sigma_c \ge \frac{ 1- \rho_1 }{9}\; .
$$
For $\rho_1 > 1/2$, we see that the value of $\sigma_a - \sigma_b$ is larger than $1/4$, so this construction does represent a result that cannot be achieved with only terms of homogeneity at most two.  If we pick, for example, $\rho_1 = 3/4$, then we have
$$
\sigma_a - \sigma_b \ge 7/24 \; , \;\;\; \sigma_b - \sigma_c \ge 1/36 \; .
$$

Note that, for $M=3$, unfortunately the current construction does not produce values for $\sigma_a - \sigma_b$, $\sigma_b - \sigma_c$ that couldn't be replicated by a Dirichlet series with existing constructions.  For instance, using the standard Hille-Bohnenblust construction for $M=3$ and adding a properly shifted Dirichlet series with a given value of $\sigma_a - \sigma_c$ (such as the alternating zeta function) will produce a series that has these properties.  This can be done only using terms of homogeneity three as well; simply use a version of the alternating zeta function which contains only terms of homogeneity three (and is ``alternating'' on these terms), we leave details to the interested reader.  However, such a series, being more simply constructed, does not afford control on the individual coefficients.  To our knowledge, ours is the first construction which exhibits a Dirichlet series having terms of homogeneity exactly three for which it is proved that $\sigma_a - \sigma_b > 1/4$, $\sigma_b - \sigma_c > 0$ and for which we have substantial knowledge regarding the individual coefficients.  

Our hope is that the method shown here, since it gives specific control over each abscissa, without any ``tricks" of adding another (unrelated) Dirichlet series, could be extended, specifically by improving the estimate in section \ref{S:General_InnerSum}.  

In section \ref{S:GenConstruct} we present the general construction.  In sections \ref{S:sigma_b} and \ref{S:sigma_a}, we prove the ``easy" bounds: an upper bound on $\sigma_b$ and a lower bound on $\sigma_a$.  These first two bounds yield the classic Hille-Bohnenblust-type Dirichlet series, for each homogeneity $M$.  In sections \ref{S:sigma_b_lower}, \ref{S:General_InnerSum} and \ref{S:General_sigma_c} we prove the ``hard" bounds, showing that our Dirichlet series is unbounded on any $\Omega_{\sigma}$ with $\sigma<0$, and then showing that our Dirichlet series converges (i.e. the sequence of partial sums converges) at a point $s=-\epsilon$ on the negative real axis.  Once all four bounds are proved, in section \ref{S:results} we derive the results for $M=2, 3$.

In section \ref{S:General_InnerSum} we isolate one of the key estimates, a basic size estimate on all partial sums of a certain set of complex numbers of modulus one.  It seems some improvement should be possible, given that the arguments are spread over the unit circle.

\section{General Construction}\label{S:GenConstruct}

For a polynomial $Q$ in complex variables $z_1 , z_2 , \ldots $, define the Wiener norm $\| Q \|_W$ to be the sum of the absolute values of the coefficients of $Q$, and let
$$
\| Q \|_{\infty} = \max \{ |Q| : |z_i| \le 1 \} \; .
$$
Let $\| \cdot \|$ denote the usual norm $\| x \|^2 = \sum |x_i|^2$.

The construction here differs from standard constructions of this type, because the matrices we use will not necessarily be square.

For $r \in \mathbb{N}$, let $\omega = \omega_r$ be the primitive $r$th root of unity $e^{2 \pi i / r}$.  For $r_1 \le r_2$, let $B^{(r_2, r_1)} : \mathbb{C}^{r_1} \rightarrow \mathbb{C}^{r_2}$ be the ``Walsh matrix'' defined by
$$
b_{ij} = \omega_{r_2}^{i j} \; , \;\;\;\; \; i = 0, 1, \ldots , r_2-1 \; , \; j = 0, 1, \ldots , r_1-1 \;\; .
$$
So, for $v \in \mathbb{C}^{r_1}$, $B^{(r_2, r_1)}$ satisfies $\| B^{(r_2, r_1)} v \|^2 = r_2 \| v \|^2$.

Let $M \in \mathbb{N}$, and suppose $r_1 \le \ldots \le r_{M+1}$.  Suppose we have $M$ sets of complex numbers, with the $j$th set having $r_j$ elements:
$$
z_0^{(1)}, \ldots , z_{r_1-1}^{(1)} , z_0^{(2)} , \ldots , z_{r_2-1}^{(2)}, \ldots , z_0^{(M)} , \ldots , z_{r_M-1}^{(M)} \;\;.
$$
Let $D^{(j)}$ be the $r_j \times r_j$ diagonal matrix with the diagonal entry $d_{i i} = z_i^{(j)}$.  We will abbreviate
$$
B^{2,1} = B^{(r_2, r_1)} \; , \;\;\; B^{3,2} = B^{(r_3, r_2)} \;\;\; \text{etc.}
$$
Let $u=(1, \ldots , 1) \in \mathbb{C}^{r_1}$, and consider the vector
$$
B^{M+1,M} D^{(M)} B^{M,M-1} D^{(M-1)} \cdots B^{3,2} D^{(2)} B^{2,1} D^{(1)} u \;\; \in \mathbb{C}^{r_{M+1}} \; .
$$
Suppose that each $z_i^{(j)}$ satisfies $ |z_i^{(j)}| \le 1$.  Then we have
\begin{align*}
\| B^{M+1,M} D^{(M)} \cdots  B^{2,1} D^{(1)} u \|^2 &= r_{M+1} \| D^{(M)} \cdots  B^{2,1} D^{(1)} u \|^2  \\
&\le r_{M+1} \|B^{M,M-1} D^{(M-1)} \cdots B^{3,2} D^{(2)} B^{2,1} D^{(1)} u \|^2 \\
&= \ldots \\
&= r_{M+1} r_M \cdots r_2 \| u \|^2 \\
&= r_{M+1} r_M \cdots r_2 r_1 \\
&= \Pi_1^{M+1} r_j \;\; .
\end{align*}
This means that each coordinate of $B^{M+1,M} D^{(M)} \cdots  B^{2,1} D^{(1)} u$ has size less than or equal to $ \left( \Pi_1^{M+1} r_j \right)^{1/2}$.  In particular the $0$th coordinate, which is
$$
\sum_{i_1=0}^{r_1-1} \cdots \sum_{i_M=0}^{r_M-1} z_{i_1}^{(1)} z_{i_2}^{(2)} \cdots z_{i_M}^{(M)} \omega_{r_2}^{ i_1 i_2} \omega_{r_3}^{ i_2 i_3} \cdots  \omega_{r_M}^{ i_{M-1} i_M } 
$$
satisfies
$$
\left| \sum_{i_1}^{r_1-1} \cdots \sum_{i_M}^{r_M-1} z_{i_1}^{(1)} \cdots z_{i_M}^{(M)} \omega_{r_2}^{ i_1 i_2}  \cdots  \omega_{r_M}^{ i_{M-1} i_M }  \right|  \le \left( \Pi_1^{M+1} r_j \right)^{1/2} \qquad \text{when} \;\; |z_i^{(j)}| \le 1  \; .
$$
We define
\begin{equation}\label{E:Define_Q}
Q = Q_{r_1, \ldots, r_M} =\sum_{i_1}^{r_1-1} \cdots \sum_{i_M}^{r_M-1} z_{i_1}^{(1)} \cdots z_{i_M}^{(M)} \omega_{r_2}^{ i_1 i_2}  \cdots  \omega_{r_M}^{ i_{M-1} i_M }  \;\; .
\end{equation}
Considering $Q$ as a polynomial in the variables $z_i^{(j)}$, we have
\begin{align}
\| Q \|_W &= \Pi_1^M r_j \label{E:WienerNormOfQ} \\
\| Q \|_{\infty} &\le \left( \Pi_1^{M+1} r_j \right)^{1/2} \;\; . \label{E:InfinityNormOfQ}
\end{align}
Note that $Q$ is not just a polynomial in the $z_i^{(j)}$, but is in fact a multilinear function of the vectors $z^{(1)}, z^{(2)}, \ldots , z^{(M)}$.  Note that $Q$ depends on $r_1 , \ldots, r_M$, so in our case, where the goal is to make the Wiener norm of $Q$ as large as possible, and make the supremum norm as small as possible, we set $r_{M+1}$ to the smallest possible value, namely $r_M$.

We denote the floor function by $ \lfloor x \rfloor = \max \{ n \in \mathbb{N} : n \le x \}$.  Next, we define some key parameters of this construction:
\begin{align*}
L &\in \mathbb{N} \\
\rho_1 &\le \ldots \le \rho_{M-1} \in [0,1] \;\;\; , \; \rho_M = 1  \\
r_1 &= \lfloor 2^{\rho_1 L} \rfloor , \ldots ,  r_M = \lfloor 2^{\rho_M L}  \rfloor   
\end{align*}
where $\rho_1 , \ldots , \rho_M $ are fixed and $L$ is an index which will range over $\mathbb{N}$.  Notice that the $r_j$ depend on $L$, so to be proper we might write $r_j^{(L)}$; we will not do so since the value of $L$ will be clear from context.
We define
$$
Q^L = Q_{ r_1 , \ldots , r_{M} } \; .
$$

When one has a polynomial $Q$ in the complex variables $z_1, z_2, \ldots$, we can create a Dirichlet polynomial $P$ via the substitution
$$
P(s) =  Q( 2^{-s} , 3^{-s} , \ldots ) \; .
$$
We have just defined a family of polynomials $\{ Q^L \}$, each being homogenous of degree $M$.  To create the Dirichlet polynomials and Dirichlet series that we want, we will use polynomials $Q$ from this family.

Let $p_k$ be the $k$th prime number.  Note that, by the Prime Number Theorem \cite{Hardy_Wright} (in fact, what we need is weaker than the Prime Number Theorem), we have $c,C$ such that
$$
c k \log k \le p_k \le C k \log k \; .
$$
This is all we will need about the distribution of primes.

Now, we define a dyadic family of disjoint sets of primes: Let the sets $K_L^{(j)}$ be defined by
$$
K_L^{(j)} = \left\{ (M+j-1) 2^L+x \; : \; x = 0 , \ldots , r_j -1 \right\}
$$
and then the family of sets of primes is defined by
$$
\Pi_L^{(j)} = \{ p_k : k \in K_L^{(j)} \} \; .
$$
For convenience, when the value of $L$ is clear from context, we denote the $i$th element of $K_L^{(j)}$ by
$$
k_i^{(j)} = (M+j-1) 2^L +i \; .
$$
Note that all of the $\Pi_L^{(j)}$ are pairwise disjoint.

The terms in the Dirichlet polynomial $P_L$ will involve only those $n$ which are a product of a single element from each $\Pi_L^{(j)}$.  Define
$$
\Pi_L^{\times} = \Pi_L^{(1)} \cdot \Pi_L^{(2)} \cdots \Pi_L^{(M)} = \left\{ n =  p_{k_{i_1}^{(1)}}  p_{k_{i_2}^{(2)}}  \cdots p_{k_{i_M}^{(M)}} \; , \;\;\; i_j  \in \{0, \ldots, r_j-1\} , \;  L \in \mathbb{N} \right\}
$$
and define
$$
P_L(s) = Q^L \left( p_{k_0^{(1)} }^{-s} , \ldots ,  p_{k_{r_1-1}^{(1)} }^{-s}  ,  p_{k_0^{(2)} }^{-s} , \ldots ,  p_{k_{r_2-1}^{(2)} }^{-s}  , \ldots,  p_{k_0^{(M)} }^{-s} , \ldots ,  p_{k_{r_M-1}^{(M)} }^{-s}   \right) \; .
$$
In other words,
$$
P_L = \sum_{n \in \mathbb{N}} \gamma_n n^{-s}
$$
where
$$
\gamma_n = \begin{cases} \omega_{r_2}^{ i_1 i_2}  \cdots  \omega_{r_M}^{ i_{M-1} i_M }  \qquad \text{ if } n \in \Pi_L^{\times} ,  \;\;\; n = p_{k_{i_1}^{(1)}} \cdots p_{k_{i_M}^{(M)}} \\ 0 \text{ else } \end{cases} \; .
$$
At this point, the idea is to consider the Dirichlet series $\sum_L c_L P_L$ with some coefficients $c_L$.  However, instead of defining the series in this way, we will define it ``directly'' by defining its coefficients $a_n$.  This will be convenient since we want to consider the conditional convergence with proper care.

We will let $X>0$ be a fixed real number which is not yet specified, but $X$ will only depend on $M$ and $\rho_1 , \ldots , \rho_M$.  So, consider the Dirichlet series
\begin{equation}\label{E:MainSeries}
f(s) = \sum_{n = 1}^{\infty} a_n n^{-s}
\end{equation}
where
\begin{align}
\beta_L &  \qquad \text{ is fixed but to-be-determined, with } |\beta_L|=1 \nonumber \\
a_n &= \begin{cases} \beta_L 2^{-X L} \;  \omega_{r_2}^{ i_1 i_2}  \cdots  \omega_{r_M}^{ i_{M-1} i_M }  \qquad \text{ if } n \in \Pi_L^{\times} ,  \;\;\; n = p_{k_{i_1}^{(1)}} \cdots p_{k_{i_M}^{(M)}} \\ 0 \text{ else } \end{cases} \; \; .  \label{E:Define_a_n}
\end{align}
So, the idea is
$$
f(s) = \sum_{L=1}^{\infty} \beta_L 2^{-X L} P_L(s)
$$
although (\ref{E:MainSeries}), (\ref{E:Define_a_n}) is the proper definition.

Having this general construction, we will now prove four bounds on the abscissae of $f$, in the next five sections.  We begin with the two easier bounds: an upper bound on $\sigma_b$ and a lower bound on $\sigma_a$.

\section{An upper bound on the abscissa of boundedness}\label{S:sigma_b}

Let $\sigma > 0$, let the real part of $s$ be greater than or equal to $\sigma$, and let $n_* $ be the smallest $n$ in $\Pi_L^{\times}$,
$$
n_* = p_{k_0^{(1)}}  p_{k_0^{(2)}}  \cdots p_{k_0^{(M)}} \; .
$$
We have
\begin{align*}
P_L(s) &= \sum_n \gamma_n n^{-s}  \\
&= n_*^{-\sigma} \sum_n \gamma_n n^{-s} n_*^{\sigma} \\
&= n_*^{-\sigma}  Q^L \left( p_{k_0^{(1)}}^{\sigma} p_{k_0^{(1)} }^{-s} \;  , \; \ldots , \; p_{k_0^{(1)}}^{\sigma} p_{k_{r_1-1}^{(1)} }^{-s} \; , \; \ldots, p_{k_0^{(M)}}^{\sigma} p_{k_0^{(M)} }^{-s} \; , \; \ldots , p_{k_0^{(M)}}^{\sigma} p_{k_{r_M-1}^{(M)} }^{-s}  \right)
\end{align*}
and
$$
| p_{k_0^{(j)}}^{\sigma} p_{k_i^{(j)} }^{-s} | \le 1 \;\;\; \forall i, j \; .
$$
Recall (\ref{E:InfinityNormOfQ}):
$$
\| Q^L \|_{\infty} \le 2^{ ( \rho_1 + \cdots + \rho_M + 1 ) L/2} \; .
$$
We also have $n_* \ge c \left( M 2^L \right)^M \ge c_M 2^{LM}$ by the Prime Number Theorem, so
that
$$
| P_L(s) | \le c_M^{-\sigma} 2^{-\sigma LM} 2^{ ( \rho_1 + \cdots + \rho_M + 1 ) L/2}
$$
and therefore
\begin{align*}
| \beta_L 2^{-X L}  P_L(s) | &\le c_M^{-\sigma} 2^{-X L} 2^{-\sigma LM} 2^{ ( \rho_1 + \cdots + \rho_M + 1 ) L/2} \\
&= c_M^{-\sigma} 2^{\big[ \; (1/2) ( \rho_1 + \cdots + \rho_M + 1 ) - \sigma M - X   \; \big] L} \; .
\end{align*}
We see that, if $(1/2) ( \rho_1 + \cdots + \rho_M + 1 ) - \sigma M - X < 0$, then $\sum_L \beta_L 2^{-X L} P_L$ defines a bounded holomorphic function in the half plane $\Omega_{\sigma}$. 

By inspection in some possibly distant right half plane we can conclude that 
$$
f = \sum_L \beta_L 2^{-X L} P_L
$$
in this distant right half plane (since the two Dirichlet series will converge absolutely).  So, $\sum_L \beta_L 2^{-X L} P_L$ gives an analytic continuation of $f$ to a bounded function on $\Omega_{\sigma}$, and therefore by a classic theorem of Bohr \cite{Bohr_1913_1} we know that the Dirichlet series for $f$ converges on $\Omega_{\sigma}$, and $f$ is bounded there, so  $\sigma_b \le \sigma$.

We have shown that if $\sigma > 0$ and $(1/2) ( \rho_1 + \cdots + \rho_M + 1 ) - \sigma M - X < 0$ then $\sigma_b \le \sigma$.  So, if 
$$
X \le (1/2) ( \rho_1 + \cdots + \rho_M + 1 )
$$ 
and we choose any $\sigma$ satisfying 
$$
\sigma > (1/2M) ( \rho_1 + \cdots + \rho_M + 1 )  - X/M
$$
then $\sigma_b \le \sigma$, and therefore by taking the infimum over $\sigma$ we have proved:

\begin{proposition}\label{P:sigma_b_upperbound}
Let $f$ be the Dirichlet series defined by  (\ref{E:MainSeries}) and (\ref{E:Define_a_n}).  If
$$
X \le (1/2) ( \rho_1 + \cdots + \rho_M + 1 )
$$
then we have
$$
\sigma_b \le (1/2M) ( \rho_1 + \cdots + \rho_M + 1 )  - X/M \; .
$$
\end{proposition}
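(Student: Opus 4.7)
The plan is to estimate each block $|\beta_L 2^{-X L} P_L(s)|$ pointwise on closed right half planes $\overline{\Omega}_\sigma$, sum over $L$ to produce a bounded holomorphic extension of $f$ to $\Omega_\sigma$, and then invoke the classical theorem of Bohr \cite{Bohr_1913_1} to pass from bounded holomorphic extension to convergence of the Dirichlet series itself.

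First I would fix $\sigma > 0$ and $s$ with $\Re{s} \ge \sigma$, and carry out the key rescaling: write $P_L(s) = n_*^{-\sigma} Q^L(\cdots)$ with $n_* = p_{k_0^{(1)}} \cdots p_{k_0^{(M)}}$ the smallest integer occurring in $P_L$, so that the rescaled arguments of $Q^L$ are the values $p_{k_0^{(j)}}^{\sigma} p_{k_i^{(j)}}^{-s}$, each of modulus at most one on $\overline{\Omega}_\sigma$ because $p_{k_0^{(j)}} \le p_{k_i^{(j)}}$ and $\Re{s} \ge \sigma > 0$. Applying the supremum bound (\ref{E:InfinityNormOfQ}) then gives $|Q^L(\cdots)| \le 2^{(\rho_1+\cdots+\rho_M+1)L/2}$, while the Prime Number Theorem estimate $p_k \ge c k \log k$ forces $n_* \ge c_M 2^{ML}$. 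Combining these with the decay factor $2^{-XL}$ produces the geometric bound
\[
|\beta_L 2^{-X L} P_L(s)| \le c_M^{-\sigma} \, 2^{\left[\frac{1}{2}(\rho_1+\cdots+\rho_M+1) - \sigma M - X\right] L}.
\]

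For any $\sigma > \frac{1}{2M}(\rho_1+\cdots+\rho_M+1) - X/M$ the bracketed exponent is strictly negative, so $\sum_L \beta_L 2^{-X L} P_L$ converges absolutely and uniformly on $\overline{\Omega}_\sigma$ to a bounded holomorphic function. To identify this with $f$, I would move to a far right half plane where $f$ itself converges absolutely; there the rearrangement grouping the terms by the disjoint blocks $\Pi_L^\times$ gives $f = \sum_L \beta_L 2^{-X L} P_L$, so we have built a bounded holomorphic continuation of $f$ to all of $\Omega_\sigma$. Bohr's theorem \cite{Bohr_1913_1} then promotes this into convergence (and boundedness) of the Dirichlet series on $\Omega_\sigma$, so that $\sigma_b \le \sigma$. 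The hypothesis $X \le \frac{1}{2}(\rho_1+\cdots+\rho_M+1)$ ensures $\frac{1}{2M}(\rho_1+\cdots+\rho_M+1) - X/M \ge 0$, so that the requirement $\sigma > 0$ used in the pointwise estimate is implied by $\sigma$ lying above the stated threshold. Taking the infimum over such $\sigma$ gives the proposition.

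There is no genuinely hard step here; the heavy lifting is packaged into (\ref{E:InfinityNormOfQ}), which has already been established from the Walsh matrix construction. The one piece of care required is the rescaling by $n_*^{\sigma}$, which is precisely what lets one trade a pointwise estimate of $P_L$ on $\overline{\Omega}_\sigma$ for the $L^\infty$ norm of $Q^L$ on the polydisc; this is exactly why the construction was set up so that the blocks $K_L^{(j)}$ have all their primes $p_{k_i^{(j)}}$ comparable to $p_{k_0^{(j)}}$.
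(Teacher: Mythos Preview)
Your proposal is correct and follows essentially the same argument as the paper: the rescaling $P_L(s)=n_*^{-\sigma}Q^L(\cdots)$ with $n_*=p_{k_0^{(1)}}\cdots p_{k_0^{(M)}}$, the application of (\ref{E:InfinityNormOfQ}) together with $n_*\ge c_M 2^{ML}$, the resulting geometric bound on $|\beta_L 2^{-XL}P_L(s)|$, the identification $f=\sum_L \beta_L 2^{-XL}P_L$ in a far right half plane, and the appeal to Bohr's theorem are all exactly as in the paper's proof. Your remark that the hypothesis $X\le\frac12(\rho_1+\cdots+\rho_M+1)$ is what guarantees the threshold is nonnegative (so that the condition $\sigma>0$ is automatically met) also matches the paper's reasoning.
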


\section{Abscissa of absolute convergence}\label{S:sigma_a}

We prove a lower bound on $\sigma_a$.  Note that, by the Prime Number Theorem, we have $c,C$ such that
$$
c k \log k \le p_k \le C k \log k
$$
and so we have
$$
\max \{ n : n \in \Pi_L^{\times} \} \le  \left( \max \{ p_k : k \in K_L^{(j)} \}  \right)^M  \; .
$$
We recall that all $k \in K_L^{(j)}$ satisfy $k \le M 2^{L+1}$, and so
$$
p_k \le M 2^{L+2} ( \log M + (L+1) \log 2) \; .
$$
Therefore, with $C_M$ being a constant depending on $M$,
$$
\max \{ p_k : k \in K_L^{(j)} \} \le C_M 2^L L
$$
and so
\begin{equation}\label{E:bound_n_L}
\max \{ n : n \in \Pi_L^{\times} \} \le C_M^M 2^{M L} L^M \; .
\end{equation}
Also, we have
$$
\left| \Pi_L^{\times} \right| = r_1 \cdots r_M \ge \frac{ r_{M+1}^{\rho_1} }{2} \cdots \frac{ r_{M+1}^{\rho_M} }{2}  \ge 2^{-M} 2^{ (\rho_1 + \cdots + \rho_M) L} \; .
$$
Therefore, we calculate:
\begin{align*}
\sum |a_n| n^{-\sigma} &= \sum_{L=1}^{\infty} 2^{-X L}  \sum_{n \in \Pi_L^{\times} } n^{-\sigma} \\
&\ge \sum_{L=1}^{\infty} 2^{-X L} \left| \Pi_L^{\times} \right|  C_M^{-\sigma}  2^{-\sigma M L} L^{-\sigma M} \\
&\ge C_M^{-\sigma} 2^{-M} \sum_{L=1}^{\infty} 2^{-X L} 2^{ (\rho_1 + \cdots + \rho_M) L}  2^{-\sigma M L} L^{-\sigma M} \\
&= C_M^{-\sigma} 2^{-M} \sum_{L=1}^{\infty} 2^{ \big[ \; (\rho_1 + \cdots + \rho_M) - \sigma M - X  \; \big] L} L^{-\sigma M}  \; .
\end{align*}
If $(\rho_1 + \cdots + \rho_M) - \sigma M - X > 0$, i.e. if 
$$
\sigma < (1/M) (\rho_1 + \cdots + \rho_M) - X/M
$$
then the above sum is infinite, so we have

\begin{proposition}\label{P:sigma_a}
Let $f$ be the Dirichlet series defined by  (\ref{E:MainSeries}) and (\ref{E:Define_a_n}).  We have 
$$
\sigma_a \ge (1/M) (\rho_1 + \cdots + \rho_M) - X/M \; .
$$ 
\end{proposition}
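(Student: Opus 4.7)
The plan is to produce a lower bound on $\sigma_a$ by exhibiting a real $\sigma$ where $\sum_n |a_n| n^{-\sigma}$ diverges; since $|a_n| = 2^{-XL}$ on $\Pi_L^{\times}$ and $0$ elsewhere, it is natural to split the absolute series as
$$
\sum_n |a_n| n^{-\sigma} = \sum_{L=1}^{\infty} 2^{-XL} \sum_{n \in \Pi_L^{\times}} n^{-\sigma}
$$
and then estimate the inner sum from below for each $L$. For the inner sum I need two inputs: a lower bound on the cardinality $|\Pi_L^{\times}|$, and an upper bound on the elements $n \in \Pi_L^{\times}$ so that $n^{-\sigma}$ admits a useful lower bound (assuming $\sigma > 0$; if $\sigma \le 0$, the bound is trivial).

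For the cardinality, $|\Pi_L^{\times}| = r_1 \cdots r_M$, and since $r_j = \lfloor 2^{\rho_j L} \rfloor$ this gives a lower bound of the order $2^{-M} 2^{(\rho_1 + \cdots + \rho_M) L}$. For the upper bound on $n$, each element of $\Pi_L^{\times}$ is a product of $M$ primes $p_k$ with index $k \le M 2^{L+1}$; the only arithmetic input needed is the weak form of the Prime Number Theorem $p_k \le C k \log k$ already flagged in the excerpt, giving $p_k \le C_M 2^L L$ and hence $n \le C_M^M 2^{ML} L^M$ on $\Pi_L^{\times}$. Putting these together, each inner sum is bounded below by a constant times $2^{[(\rho_1 + \cdots + \rho_M) - \sigma M] L} L^{-\sigma M}$.

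Inserting this into the outer sum produces, up to an $L$-independent constant,
$$
\sum_n |a_n| n^{-\sigma} \ge c_{M,\sigma} \sum_{L=1}^{\infty} 2^{\left[(\rho_1 + \cdots + \rho_M) - \sigma M - X\right] L} L^{-\sigma M},
$$
and since the polynomial factor $L^{-\sigma M}$ is dominated by any positive exponential in $L$, this diverges whenever the bracket is positive, i.e.\ whenever $\sigma < (\rho_1 + \cdots + \rho_M)/M - X/M$. Taking the supremum over such $\sigma$ gives the desired bound $\sigma_a \ge (\rho_1 + \cdots + \rho_M)/M - X/M$.

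There is no real obstacle here: the argument is essentially bookkeeping combined with the dyadic design of the $r_j$ and the crude PNT estimate on $p_k$. The only subtlety worth flagging is ensuring that if the proposed $\sigma$ is $\le 0$ the conclusion holds vacuously (or trivially, since then even the constant term in the series prevents absolute convergence at $\sigma$ for any series with $|\Pi_L^{\times}|$ nonempty for infinitely many $L$), so the interesting regime is $\sigma > 0$, where the lower bound on $n^{-\sigma}$ via the PNT upper bound on $n$ is actually needed.
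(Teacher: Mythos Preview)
Your proposal is correct and follows essentially the same argument as the paper: split $\sum |a_n|n^{-\sigma}$ into blocks over $L$, lower-bound $|\Pi_L^\times|=r_1\cdots r_M$ by $2^{-M}2^{(\rho_1+\cdots+\rho_M)L}$, upper-bound each $n\in\Pi_L^\times$ by $C_M^M 2^{ML}L^M$ via the crude PNT estimate $p_k\le Ck\log k$, and conclude divergence whenever the resulting exponential rate $(\rho_1+\cdots+\rho_M)-\sigma M-X$ is positive. The paper does not separately discuss the case $\sigma\le 0$, but otherwise the two arguments are line-for-line the same.
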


At this point, we note that we have produced the classic Hille-Bohnenblust construction.
With propositions \ref{P:sigma_b_upperbound} and \ref{P:sigma_a}, we see that as long as we choose
$$
X \le (1/2) ( \rho_1 + \cdots + \rho_M + 1 )
$$
then we have
$$
\sigma_a - \sigma_b \ge \frac{1}{2M} ( \rho_1 + \cdots + \rho_M ) - \frac{1}{2M} \; .
$$
For any value of $M$, by choosing $\rho_1 = \ldots = \rho_M = 1$ (and $X=0$ for instance), the Dirichlet series $f$ has terms of homogeneity exactly $M$ and $\sigma_a - \sigma_b \ge \frac{1}{2} - \frac{1}{2M}$, the largest possible gap between $\sigma_a$ and $\sigma_b$.

\section{Proving $\sigma_b \ge 0$}\label{S:sigma_b_lower}

To show that $f$ becomes unbounded if we cross the abscissa $\sigma=0$, i.e. to prove $\sigma_b \ge 0$, we will demonstrate that (under certain conditions on $X$) the partial sums of $f$ achieve arbitrarily large values on the imaginary axis.  This proves the bound because, if $\sigma_b < 0$ then by classic results \cite{Bohr_1913_1} the partial sums of $f$ converge uniformly to $f$ on the line $\sigma = 0$.  Uniform convergence implies that there is some large $N'$, such that
$$
\forall N \ge N', \| \sum_{N=1}^{N'} a_n n^{-it} \|_{\infty} \le 2 \| f \|_{\infty} \; .
$$
In particular, with $\sigma_b < 0$ the partial sums cannot achieve arbitrarily large values.  Note that we will not show that $f$ itself is actually unbounded on the imaginary axis (this is fortunately not necessary; it may well not be true).

To prove that the partial sums achieve arbitrarily large values on the imaginary axis, we will show that the first finitely many $L$ of the terms $\beta_L 2^{-X L} P_L$ have very similar arguments at some point on the imaginary axis.  Furthermore, we want to prove this while still retaining complete flexibility to choose $\{\beta_L\}$.   This can be accomplished because, for any choice of $\{ \beta_L \}$, once we consider a particular partial sum, we can search the entire imaginary axis to choose the argument of $ P_L$ at will, for each of finitely many $L$ (simultaneously) using Kronecker's theorem.

The plan is as follows:

\begin{enumerate}

\item{ Fix a large number $K$; the partial sums will then be constructed to exceed $K$. }

\item{ Let $L_K = 2^{M+1} K$; this is chosen so $\sum_{L \le  L_K} 2^{-X L} \| Q^L \|_{\infty} > 2 K$ (for the right $X$). }

\item{ For each $L \le  L_K$, find $z_L$ such that $\beta_L Q^L( z_L ) = \| Q^L \|_{\infty} $.} 

\item{ By Kronecker's theorem, find $t = t_K$ such that $\left( p_{k_0^{(1)} }^{-i t_K} , \ldots , p_{k_{r_M-1}^{(M)} }^{- i t_K}  \right)  \approx z_L$ is satisfied simultaneously for all $L \le L_K$. }

\end{enumerate}

Then, $P_L ( i t_K ) \approx Q^L (z_L)$ and so 
\begin{align*}
\sum_{L \le L_K} 2^{-X L} \beta_L P_L( i t_K) &\approx \sum_{L \le L_K} 2^{-X L} \beta_L Q^L (z_L) \\
&= \sum_{L \le  L_K} 2^{-X L} \| Q^L \|_{\infty}  
\end{align*}
where the first sum is indeed a partial sum of the Dirichlet series for $f$ at the point $s = i t_K$.  The fact that this is in fact a partial sum of the Dirichlet series for $f$ is not a complete triviality, but it is true; it equals $\sum_{n=1}^{N} a_n n^{-i t_K}$ where $N = \max \{ n : n \in \Pi_{L_K}^{\times} \}$.  The important point is that, if $L_1 < L_2$ then
$$
\forall n_1 \in \Pi_{L_1}^{\times} \; , \; \forall n_2 \in \Pi_{L_2}^{\times}  \; , \; \text{we have} \;\; n_1 < n_2 \; .
$$

Now, the details.  We first fix a (large) K.

Recall that
$$
Q^L = \sum_{i_1}^{r_1-1} \cdots \sum_{i_M}^{r_M-1} z_{i_1}^{(1)} \cdots z_{i_M}^{(M)} \omega_{r_2}^{ i_1 i_2}  \cdots  \omega_{r_M}^{ i_{M-1} i_M } \; .
$$ 
Recall inequality (\ref{E:M_homogenous_poly_estimate}) from the introduction: For any $M$-homogenous polynomial in $n$ variables, we have
$$
\left(  \sum_{|\alpha| = M} | a_{\alpha}|^{ \frac{2M}{M+1} } \right)^{ \frac{M+1}{2M}  } \le D_M \sup_{z \in \mathbb{D}^n } \left| \sum_{|\alpha| = M} a_{\alpha} z^{\alpha} \right| \; .
$$
By the maximum modulus principle, $Q^L$ will attain its supremum on $\mathbb{T}^n$.  So, for each $L$, let $( z_i^{* (j)} ) = ( z_i^{*(j)}(L) )$ be a point on $\mathbb{T}^n$ where the supremum is attained.  We have
\begin{align}
\left| Q^L \left( ( z_i^{*(j)} ) \right) \right| \ge D_M^{-1} \left(  \sum_{|\alpha| = M} | a_{\alpha}|^{ \frac{2M}{M+1} } \right)^{ \frac{M+1}{2M}  } &= D_M^{-1} \left(  r_1 \cdots r_M \right)^{ \frac{M+1}{2M}  }  \nonumber \\
&\ge 2^{-M} 2^{ ( \rho_1 + \cdots + \rho_M ) \frac{M+1}{2M} L}  \; .  \label{E:z_star}
\end{align}
Next, note that since $Q^L$ is in fact multilinear on $\mathbb{C}^{r_1} \times \cdots \times \mathbb{C}^{r_M}$, we can multiply by a constant in the $\mathbb{C}^{r_1}$ coordinate and then factor out that constant by linearity: for any $\tau_L$ with $|\tau_L| = 1$, we have
$$
Q^L \left( \tau_L z_0^{* (1)} , \tau_L z_1^{* (1)} , \cdots , \tau_L z_{r_1-1}^{* (1)} , \;\;\;  z_0^{* (2)}, \cdots , z_{r_2-1}^{* (2)} , \cdots , z_{r_M-1}^{* (M)} \right) = \tau_L Q^L \left( ( z_i^{*(j)} ) \right) \; .
$$
Let $y_{L}$ be the point appearing in the equation above:
\begin{align*}
y_{i,j,L} &= \tau_L z_i^{*(j)} (L)  \qquad \text{ for } j = 1 \\
y_{i,j,L} &= z_i^{*(j)} (L)  \qquad \text{ for } j > 1
\end{align*}
for all $L$, and so $Q^L( y_L ) = \tau_L Q^L \left( ( z_i^{*(j)} ) \right) $.  We now fix $\tau_L$ such that
\begin{equation}\label{E:tau_L}
\beta_L Q^L( y_L ) = \beta_L \tau_L Q^L \left( ( z_i^{*(j)} ) \right)  = \left| Q^L \left( ( z_i^{*(j)} ) \right) \right| \; .
\end{equation}
We will now need Kronecker's Theorem \cite{Hardy_Wright}:
\begin{theorem}[Kronecker's Theorem]\label{T:Kronecker}
Let $\lambda_1, \ldots , \lambda_n \in \mathbb{R}$ be rationally independant.  Then, the map
$$
t \longrightarrow ( e^{i \lambda_1 t} , \ldots , e^{i \lambda_n t} )
$$
from $\mathbb{R}$ to $\mathbb{T}^n$ has dense range in $\mathbb{T}^n$.
\end{theorem}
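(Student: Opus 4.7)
The plan is to prove Kronecker's theorem via Pontryagin duality applied to the one-parameter subgroup $\gamma \colon \mathbb{R} \to \mathbb{T}^n$ defined by $\gamma(t) = (e^{i\lambda_1 t}, \ldots, e^{i\lambda_n t})$, with Weyl's equidistribution criterion available as a concrete alternative that avoids invoking the structure theorem for closed subgroups of the torus.

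First I would observe that $\gamma$ is a continuous group homomorphism from $(\mathbb{R},+)$ into the compact abelian group $\mathbb{T}^n$, so its image $\Gamma = \gamma(\mathbb{R})$ is a subgroup, and its closure $H = \overline{\Gamma}$ is a closed subgroup of $\mathbb{T}^n$. The problem thus reduces to showing $H = \mathbb{T}^n$.

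Next I would invoke the annihilator description of closed subgroups of $\mathbb{T}^n$: any proper closed subgroup $H \subsetneq \mathbb{T}^n$ is annihilated by some nontrivial character, i.e.\ there exist integers $m_1, \ldots, m_n$, not all zero, with $z_1^{m_1} \cdots z_n^{m_n} = 1$ for every $(z_1,\ldots,z_n) \in H$. Evaluating this identity on $\gamma(t)$ gives
$$
e^{i(m_1 \lambda_1 + \cdots + m_n \lambda_n) t} = 1 \qquad \text{for all } t \in \mathbb{R},
$$
which forces $m_1 \lambda_1 + \cdots + m_n \lambda_n = 0$. This is a nontrivial integer (hence rational) linear relation among the $\lambda_j$, contradicting their rational independence. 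Therefore no such $(m_1,\ldots,m_n)$ exists, and $H = \mathbb{T}^n$, proving density of $\Gamma$.

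The one real ingredient is the annihilator/duality statement for closed subgroups of $\mathbb{T}^n$; if one wishes to avoid it, Weyl's criterion provides a self-contained substitute. It suffices to prove that for every nontrivial character $\chi(z) = z_1^{m_1}\cdots z_n^{m_n}$ one has $\tfrac{1}{T}\int_0^T \chi(\gamma(t))\,dt \to 0$ as $T \to \infty$. But this integral equals $\tfrac{1}{T}\int_0^T e^{i\alpha t}\,dt$ with $\alpha = m_1\lambda_1 + \cdots + m_n\lambda_n$, which is nonzero by rational independence, so the integral is $O(1/(|\alpha|T))$. Either way, the arithmetic after invoking rational independence is immediate; the main step is the duality (or equidistribution) input.
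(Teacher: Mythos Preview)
Your argument is correct and is one of the standard modern proofs of Kronecker's theorem: identify the closure of the one-parameter subgroup as a closed subgroup of $\mathbb{T}^n$, use duality to find an annihilating character if the closure were proper, and derive a nontrivial integer relation among the $\lambda_j$. The Weyl-criterion variant you sketch is also valid and gives the stronger equidistribution statement.

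However, there is nothing to compare your proof against: the paper does not prove Kronecker's theorem. It is stated as Theorem~\ref{T:Kronecker} with a reference to Hardy--Wright \cite{Hardy_Wright} and then simply invoked as a black box in Section~\ref{S:sigma_b_lower} to choose the point $t_K$ on the imaginary axis. So your proposal supplies a proof where the paper deliberately offers none; any correct proof would serve equally well for the paper's purposes.
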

The set $\{ \log p : p \; \text{is prime } \}$ is rationally independant, so by Kronecker's Theorem let $t_K$ be chosen so that
$$
| p_{k_i^{(j)}}^{-i t_K} - y_{i,j,L} | \le 2^{- (2M+1) L}
$$
is satisfied for all $i,j$ and all $L \le L_K$ (simultaneously).  Write this as
$$
p_{k_i^{(j)}}^{-i t_K} = y_{i,j,L} + \delta_{i,j,L} \; .
$$
So,
\begin{align*}
&P_L(i t_K) = Q^L( p_1^{-i t_K} , p_2^{-i t_K} , \ldots )  \\
&=  \sum_{i_1 , \ldots , i_M} \left( \tau_L z_{i_1}^{* (1)} + \delta_{i_1,1,L} \right)  \left( z_{i_2}^{* (2)} + \delta_{i_2,2,L} \right)  \cdots \left( z_{i_M}^{* (M)} + \delta_{i_M,M,L} \right)  \omega_{r_2}^{ i_1 i_2} \cdots  \omega_{r_M}^{ i_{M-1} i_M }  \\
&=  \sum_{i_1 , \ldots , i_M} \left( \tau_L z_{i_1}^{* (1)} z_{i_2}^{* (2)}  \cdots z_{i_M}^{* (M)}  + \sum_{\delta} \right) \omega_{r_2}^{ i_1 i_2} \cdots  \omega_{r_M}^{ i_{M-1} i_M }  
\end{align*}
where $\sum_{\delta}$ represents the remaining terms in the binomial expansion; we have
$$
\left| \sum_{\delta} \right| \le 2^M 2^{ -(2M+1)  L } \; .
$$
Continuing the above equality,
\begin{align*}
P_L(i t_K ) &= \tau_L Q^L \left( ( z_i^{*(j)} ) \right) + \sum_{i_1 , \ldots , i_M} \left( \sum_{\delta} \right) \omega_{r_2}^{ i_1 i_2} \cdots  \omega_{r_M}^{ i_{M-1} i_M }    \\
&= \tau_L Q^L \left( ( z_i^{*(j)} ) \right) + \delta_L  \; .
\end{align*}
The last line above serves to define $\delta_L$, and we have 
$$
| \delta_L | \le 2^{ (\rho_1 + \cdots + \rho_M) L} 2^M 2^{- (2M+1) L } \le 2^{M L} 2^M 2^{- (2M+1) L } \le 2^{-L} \; .
$$

Let $N_K = \max \{ n : n \in \Pi_{L_K}^{\times} \}$.  Then
\begin{align*}
\sum_{n=1}^{N_K} a_n n^{-i t_K} &= \sum_{L=1}^{L_K} \beta_L 2^{-X L} P_L(i t_K) \\
&= \sum_{L=1}^{L_K} \beta_L 2^{-X L} \left(  \tau_L Q^L \left( ( z_i^{*(j)} ) \right) + \delta_L \right) \\
&=  \sum_{L=1}^{L_K} \beta_L 2^{-X L} \tau_L Q^L \left( ( z_i^{*(j)} ) \right) + \sum_{L=1}^{L_K} \beta_L 2^{-X L} \delta_L  \\
&=  \sum_{L=1}^{L_K} 2^{-X L} \left| Q^L \left( ( z_i^{*(j)} ) \right) \right| + \sum_{L=1}^{L_K} \beta_L 2^{-X L} \delta_L  \qquad \text{[by (\ref{E:tau_L})]} \; .
\end{align*}
We see that
$$
\sum_{L=1}^{L_K} 2^{-X L} \left| Q^L \left( ( z_i^{*(j)} ) \right) \right|  \ge \sum_{L=1}^{L_K} 2^{- X L } 2^{-M} 2^{ ( \rho_1 + \cdots + \rho_M ) \frac{M+1}{2M} L} \; .
$$
We now can choose the value of $X$ that we would like:
$$
X = ( \rho_1 + \cdots + \rho_M ) \frac{M+1}{2M} \; .
$$
With this,
$$
\sum_{L=1}^{L_K} 2^{-X L} \left| Q^L \left( ( z_i^{*(j)} ) \right) \right|  \ge 2^{-M} L_K \ge 2 K
$$
and $ \left| \sum_{L=1}^{L_K} \beta_L 2^{-X L} \delta_L \right| \le \sum 2^{-X L} 2^{-L} \le 1 \le K$, so we have
$$
\left| \sum_{n=1}^{N_K} a_n n^{-i t_K}  \right| \ge (2K) - (K) = K \; .
$$
For arbitrary $K$, we have found a partial sum of $f$ which, at a point on the imaginary axis, assumes a value having modulus larger than $K$ and so we have proved

\begin{proposition}\label{P:sigma_b_lowerbound}

Let $f$ be the Dirichlet series defined by  (\ref{E:MainSeries}) and (\ref{E:Define_a_n}). 
With 
$$
X = ( \rho_1 + \cdots + \rho_M ) \frac{M+1}{2M}
$$
and for any choice of $\beta_L$, $|\beta_L|=1$, we have $\sigma_b \ge 0$.

\end{proposition}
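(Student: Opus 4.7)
The plan is to show, contrapositively via a classical theorem of Bohr \cite{Bohr_1913_1}, that the partial sums of $f$ attain arbitrarily large moduli on the line $\Re s = 0$. If $\sigma_b < 0$ then the partial sums would converge uniformly to $f$ on $\{\Re s = 0\}$, hence be uniformly bounded by $2\|f\|_\infty$, contradicting unboundedness. The target partial sums will be of the form $\sum_{L=1}^{L_K}\beta_L 2^{-XL} P_L(it_K)$ for suitable $t_K \in \mathbb{R}$ and $L_K \to \infty$; these really are partial sums $\sum_{n=1}^{N_K} a_n n^{-it_K}$ of $f$ because the blocks $K_L^{(j)} = \{(M+j-1)2^L + x\}$ have been arranged so that every $n \in \Pi_{L_1}^\times$ is strictly smaller than every $n \in \Pi_{L_2}^\times$ whenever $L_1 < L_2$.

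The construction of $t_K$ has three ingredients. First, for each $L$ I locate $z^*_L \in \mathbb{T}^{r_1+\cdots+r_M}$ where $|Q^L|$ is maximal; the Bohnenblust--Hille inequality (\ref{E:M_homogenous_poly_estimate}) applied to $Q^L$, together with $\|Q^L\|_W = r_1 \cdots r_M$ from (\ref{E:WienerNormOfQ}), gives
\[
|Q^L(z^*_L)| \ge D_M^{-1}(r_1\cdots r_M)^{(M+1)/(2M)} \gtrsim 2^{((\rho_1+\cdots+\rho_M)(M+1)/(2M))L}.
\]
Since $Q^L$ is multilinear in the blocks $z^{(1)},\ldots,z^{(M)}$, I rotate the first block by a unimodular constant $\tau_L$ so that $\beta_L \tau_L Q^L(z^*_L) = |Q^L(z^*_L)|$; this adjustment works for any adversarial choice of $\beta_L$.

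Second, since $\{\log p : p\text{ prime}\}$ is rationally independent, Kronecker's theorem furnishes a single $t_K \in \mathbb{R}$ with $p_{k_i^{(j)}}^{-it_K}$ within $2^{-(2M+1)L}$ of the rotated target coordinate for every prime involved and every $L \le L_K := 2^{M+1}K$ simultaneously. Expanding by multilinearity, $P_L(it_K) = \tau_L Q^L(z^*_L) + \delta_L$ where $\delta_L$ consists of $2^M - 1$ binomial cross terms; each is bounded in absolute value by $(r_1\cdots r_M)\cdot 2^{-(2M+1)L} \le 2^{ML}\cdot 2^{-(2M+1)L} = 2^{-(M+1)L}$, giving $|\delta_L|\le 2^{-L}$.

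Choosing $X = (\rho_1+\cdots+\rho_M)(M+1)/(2M)$ makes $2^{-XL}|Q^L(z^*_L)| \gtrsim 2^{-M}$, so the main term is at least $2^{-M}L_K = 2K$, while $\sum_L 2^{-XL}|\delta_L| = O(1)$. Hence $\bigl|\sum_{n=1}^{N_K} a_n n^{-it_K}\bigr| \ge 2K - O(1) \ge K$ for large $K$, completing the argument. The step I expect to demand the most care is the coordinated Kronecker approximation: the error tolerance $2^{-(2M+1)L}$ must be chosen precisely so that, after multilinear expansion introduces a combinatorial factor of up to $2^{ML}$ and a sum over $L$ weighted by $2^{-XL}$, the residual perturbation remains summable while the main term still grows linearly in $L_K$.
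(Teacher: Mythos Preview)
Your proposal is correct and follows essentially the same approach as the paper's own proof: the contrapositive via Bohr's uniform-convergence theorem, the Bohnenblust--Hille lower bound on $\|Q^L\|_\infty$, the multilinear rotation by $\tau_L$ to align phases with the given $\beta_L$, the simultaneous Kronecker approximation with tolerance $2^{-(2M+1)L}$ for all $L\le L_K=2^{M+1}K$, and the binomial error estimate $|\delta_L|\le 2^{-L}$ are all exactly what the paper does, with the same constants.
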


\section{A Key Estimate}\label{S:General_InnerSum}

Before presenting the final bound (the upper bound on $\sigma_c$) we will require a size estimate on a sum of the following form:
$$
\sum_{n \in \Pi_L^{\times} , n \le P }  \omega_{r_2}^{ i_1 i_2}  \cdots  \omega_{r_M}^{ i_{M-1} i_M } 
$$
for general P.  This sum is adding terms on the unit circle with widely varying arguments, so a high degree of cancellation can be hoped for.  Unfortunately, no sophisticated or impressive bound has been obtained by the current author; we will simply isolate the $i_M$ index, sum the resulting one-variable geometric series, and then bound by absolute values.  Recalling
$$
n = p_{k_{i_1}^{(1)}} \cdots p_{k_{i_M}^{(M)}} \; \longleftrightarrow \; (i_1, \ldots ,i_M)
$$
the one key observation is that, because we selected the primes in increasing order, the set
$$
\left\{ (i_1, \ldots ,i_M) : n \le P \right\}
$$
has the following weak convexity property: if we fix $(i_1, \ldots ,i_{M-1})$, then the set 
$$
\left\{ i_M :  (i_1, \ldots ,i_M) \text{ satisfies } n \le P \right\}
$$
is an ``interval'' of natural numbers, meaning that it equals every natural number between some (unspecified) lower and upper bounds, call them $l$ and $u$ respectively.  Leaving out the terms with $i_{M-1} = 0$, we have:
\begin{align*}
\sum_{n \in \Pi_L^{\times} , n \le P \; , \; i_{M-1} \ne 0 }  &\omega_{r_2}^{ i_1 i_2}  \cdots  \omega_{r_M}^{ i_{M-1} i_M }  \\
&= \sum_{(i_1, \ldots ,i_{M-1}) \; , \;  i_{M-1} \ne 0 } \omega_{r_2}^{ i_1 i_2}  \cdots  \omega_{r_{M-1}}^{ i_{M-2} i_{M-1} }   \; \sum_{i_M :  (i_1, \ldots ,i_M) \text{ satisfies } n \le P} \omega_{r_M}^{ i_{M-1} i_M } \\
&= \sum_{(i_1, \ldots ,i_{M-1}) \; , \;  i_{M-1} \ne 0} \omega_{r_2}^{ i_1 i_2}  \cdots  \omega_{r_{M-1}}^{ i_{M-2} i_{M-1} }   \; \sum_{i_M = l}^u \omega_{r_M}^{ i_{M-1} i_M } \\
&= \sum_{(i_1, \ldots ,i_{M-1}) \; , \;  i_{M-1} \ne 0} \omega_{r_2}^{ i_1 i_2}  \cdots  \omega_{r_{M-1}}^{ i_{M-2} i_{M-1} }   \; \left(  \frac{\omega_{r_M}^a - \omega_{r_M}^b}{1 - \omega_{r_M}^{i_{M-1}} }  \right) \; . \\
\end{align*}
Noting that on $[-\pi, \pi]$ there is some small $c$ such that $1 - \cos x \ge c x^2$, we have
\begin{align*}
| 1 - \omega_{r_M}^{i_{M-1}} |^2 &= 2 \big(1 - \cos( 2 \pi i_{M-1} /r_M ) \; \big) \\
&\ge 2 c (2 \pi i_{M-1} /r_M)^2
\end{align*}
and so there is an absolute constant $c'$ such that $| 1 - \omega_{r_M}^{i_{M-1}} | \ge c' i_{M-1} /r_M$.

Now, bounding with absolute values, including those terms with $i_{M-1} = 0$, and estimating $\sum_{i=1}^K i^{-1} \le C \log (K+1)$, we have
\begin{align*}
\left| \sum_{n \in \Pi_L^{\times} , n \le P }  \omega_{r_2}^{ i_1 i_2}  \cdots  \omega_{r_M}^{ i_{M-1} i_M }  \right| &\le c'^{-1} \sum_{(i_1, \ldots ,i_{M-2})} 2 \sum_{i_{M-1} \ge 1} \frac{r_M}{i_{M-1}} + \sum_{(i_1, \ldots ,i_M) : i_{M-1} = 0} 1 \\
&\le 2 c'^{-1} C \; r_1 \cdots r_{M-2} r_M \big(  \log (r_{M-1}+1) \;  + 1 \big) \; .
\end{align*}
Note that $r_j \le r_{M+1}^{\rho_j} \le 2^{\rho_j L }$.  With $C'_M$ being another constant depending on $M$,
\begin{equation}\label{E:EstimatedOmegaSum}
\left| \sum_{n \in \Pi_L^{\times} , n \le P }  \omega_{r_2}^{ i_1 i_2}  \cdots  \omega_{r_M}^{ i_{M-1} i_M }  \right|  \le C'_M 2^{ ( \rho_1 + \cdots + \rho_{M-2} + \rho_M ) L }  L \; .
\end{equation}

\section{Convergence on the negative real axis}\label{S:General_sigma_c}

Note that, after proving the bounds on the abscissae of $f$ in the preceding sections, we still have the freedom to choose $\beta_L$.  We will now use $\beta_L$ to arrange a large amount of cancellation in the partial sums of $f$ at a certain point $s = - \epsilon$ (to be determined) on the negative real axis.

Fix some $\epsilon > 0$, and consider a partial sum of the series (\ref{E:MainSeries}) at $s = -\epsilon$:
$$
A_N(\epsilon) = \sum_{n = 1}^N a_n n^{\epsilon} \;.
$$
Our goal is to find for which values of $\epsilon$ it is possible to choose $\beta_L$ such that the sequence $\{ A_N(\epsilon) \}$ converges as $N \rightarrow \infty$.

A straightforward but quite crucial fact will be the following (mentioned previously): If $L_1 < L_2$ then
$$
\forall n_1 \in \Pi_{L_1}^{\times} \; , \; \forall n_2 \in \Pi_{L_2}^{\times}  \; , \; \text{we have} \;\; n_1 < n_2 \; .
$$
We define
$$
L^*(N) = \max \{ L : \exists n \le N \text{ with }  n \in \Pi_L^{\times}  \} \; .
$$
By definition of $L^*(N)$, and by the fact above, if $L<L^{*}(N)$ then
$$
\forall  n \in \Pi_L^{\times}  \; , \; \text{we have } n \le N
$$
and therefore we can write
$$
A_N(\epsilon) = \sum_{L<L^*(N)} \; \sum_{ n \in \Pi_L^{\times}  } a_n n^{\epsilon} + \sum_{ n \in \Pi_{L^*(N)}^{\times}  , n \le N} a_n n^{\epsilon} \; .
$$
The important point is that, in the first term, the inner sums range over all $n \in \Pi_L^{\times}$ because all these $n$ satisfy $n \le N$.

We would like to continue to express the inner sums in $A_N(\epsilon)$ as one-dimensional (in order to sum by parts in the desired order), so let proceed with the understanding 
$$
n = p_{k_{i_1}^{(1)}} \cdots p_{k_{i_M}^{(M)}} \;\; \longleftrightarrow \;\; (i_1, \ldots ,i_M) \; .
$$

We have
\begin{align*}
A_N(\epsilon) = \sum_{L < L^*(N)} &\beta_L 2^{-X L} \sum_{n \in \Pi_L^{\times} } \omega_{r_2}^{ i_1 i_2}  \cdots  \omega_{r_M}^{ i_{M-1} i_M }  \;  n^{\epsilon}  \\
&+  \beta_{L^*(N)} \alpha_{L^*(N)} \sum_{ n \in \Pi_{L^*(N)}^{\times}  , n \le N}  \omega_{r_2}^{ i_1 i_2}  \cdots  \omega_{r_M}^{ i_{M-1} i_M }   n^{\epsilon}   \; .
\end{align*}
Define
\begin{align}
\Omega_L(\epsilon) &= \sum_{n \in \Pi_L^{\times} } \omega_{r_2}^{ i_1 i_2}  \cdots  \omega_{r_M}^{ i_{M-1} i_M }   \; n^{\epsilon} \; .  \label{E:BigOmega} \\
\Gamma(N, \epsilon) &= \sum_{ n \in \Pi_{L^*(N)}^{\times}  , n \le N}  \omega_{r_2}^{ i_1 i_2}  \cdots  \omega_{r_M}^{ i_{M-1} i_M }  \; n^{\epsilon}  \; . \label{E:BigGamma}
\end{align}
This means
\begin{equation} \label{E:Main_A_N}
A_N(\epsilon) = \sum_{L < L^*(N)} \beta_L 2^{-X L} \Omega_L(\epsilon)  \;\;\; + \;\;\; \beta_{L^*(N)} \alpha_{L^*(N)} \Gamma(N, \epsilon) \; .
\end{equation}
We aim to show that $2^{-X L} | \Omega_L(\epsilon) | \rightarrow 0$ and $2^{-X L} | \Gamma(N, (\epsilon) | \rightarrow 0$, for a certain value of $\epsilon$.  This is not sufficient by itself, but along with our freedom to choose $\beta_L$ it will be enough to prove the result.

We prove next that the sums (\ref{E:BigOmega}), (\ref{E:BigGamma}) have a large amount of cancellation.  A large amount of cancellation should not be surprising, because of the $\omega^{i_1 i_2 + \cdots}$ factors.  Let us index the elements of $\Pi_L^{\times}$ in increasng order: 
$$
\Pi_L^{\times} = \left\{ n_1 , \ldots , n_{\left| \Pi_L^{\times}  \right|} \right\} \; .
$$  
We will sum by parts:

\begin{proposition}[A Version of Summation by Parts]

Suppose $a_1 , \ldots , a_p$ and $b_1 , \ldots , b_p$ are given.  Define $B_N = \sum_{j=1}^N b_j$.  Then
$$
\sum_{j=1}^p a_i b_i =  \sum_{j=1}^{p-1} (a_j - a_{j+1}) B_j + a_p B_p
$$

\end{proposition}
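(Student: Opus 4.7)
The plan is to prove this identity by the classical Abel summation trick: express each $b_j$ as a telescoping difference of the partial sums $B_j$ and then reindex. Adopting the convention $B_0 = 0$, we have $b_j = B_j - B_{j-1}$ for every $j \ge 1$, so
$$\sum_{j=1}^{p} a_j b_j = \sum_{j=1}^{p} a_j B_j - \sum_{j=1}^{p} a_j B_{j-1}.$$

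In the second sum I shift the index via $k = j - 1$, obtaining $\sum_{k=0}^{p-1} a_{k+1} B_k$; the $k=0$ term drops out since $B_0 = 0$, so this equals $\sum_{j=1}^{p-1} a_{j+1} B_j$. Splitting off the $j = p$ contribution from the first sum and combining then yields
$$\sum_{j=1}^{p} a_j B_j - \sum_{j=1}^{p-1} a_{j+1} B_j = a_p B_p + \sum_{j=1}^{p-1} (a_j - a_{j+1}) B_j,$$
which is exactly the stated formula.

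Since the assertion is purely algebraic and reduces to a one-line telescoping calculation, there is no substantive obstacle; the only point demanding care is handling the boundary term $a_p B_p$ produced when the reindexed sum is aligned with the unshifted one. The reason for isolating this version of summation by parts here is presumably that in the next section it will be applied with $a_j$ playing the role of a slowly varying factor such as $n_j^{\epsilon}$ and $b_j$ playing the role of the phase $\omega_{r_2}^{i_1 i_2} \cdots \omega_{r_M}^{i_{M-1} i_M}$, so that the partial sums $B_j$ inherit the cancellation bound (\ref{E:EstimatedOmegaSum}) while the differences $a_j - a_{j+1}$ supply an additional decay factor.
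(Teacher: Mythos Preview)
Your proof is correct and is precisely the standard Abel summation argument; the paper itself simply writes ``Standard.'' for its proof, so your approach is exactly what is intended.
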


\begin{proof}
Standard.
\end{proof}

Applying this to $\Omega_L(\epsilon)$, $\Gamma(N, \epsilon)$, we have
$$
\Omega_L(\epsilon) = \sum_{j=1}^{\left| \Pi_L^{\times}  \right| - 1} (n_j^{\epsilon} - n_{j+1}^{\epsilon} )  \left[ \sum_{n \in \Pi_L^{\times} , n \le n_j }  \omega_{r_2}^{ i_1 i_2}  \cdots  \omega_{r_M}^{ i_{M-1} i_M }  \right]  \; + n_{\left| \Pi_L^{\times}  \right|}^{\epsilon}  \sum_{n \in \Pi_L^{\times} }  \omega_{r_2}^{ i_1 i_2}  \cdots  \omega_{r_M}^{ i_{M-1} i_M } 
$$
and, letting $n^*$ be the maximal element of $\left\{ n :  n \in \Pi_{L^*(N)}^{\times}  , n \le N  \right\}$,
\begin{align*}
\Gamma(N, \epsilon) =& \sum_{j : n_j \le N} (n_j^{\epsilon} - n_{j+1}^{\epsilon} )  \left[ \sum_{n \in \Pi_{L^*(N)}^{\times} , n \le n_j }  \omega_{r_2}^{ i_1 i_2}  \cdots  \omega_{r_M}^{ i_{M-1} i_M }  \right]  \\
& \;\;\; + (n^*)^{\epsilon}  \sum_{n \in \Pi_{L^*(N)}^{\times}  , n \le N}  \omega_{r_2}^{ i_1 i_2}  \cdots  \omega_{r_M}^{ i_{M-1} i_M }  \; .
\end{align*}
Now, we need to estimate the size of a sum of the form
$$
\sum_{n \in \Pi_L^{\times} , n \le P }  \omega_{r_2}^{ i_1 i_2}  \cdots  \omega_{r_M}^{ i_{M-1} i_M } 
$$
for general $P$, this estimate was provided by equation (\ref{E:EstimatedOmegaSum}) in the preceding section (we will substitute the actual bound in place of $E_L$ when required):
$$
\left| \sum_{n \in \Pi_L^{\times} , n \le P }  \omega_{r_2}^{ i_1 i_2}  \cdots  \omega_{r_M}^{ i_{M-1} i_M }  \right| \le E_L \; .
$$
With this estimate, we have
$$
\left| \Omega_L(\epsilon)  \right| \le 2 \; (\max \{ n : n \in \Pi_L^{\times} \})^{\epsilon} E_L \; .
$$
Recalling the bound (\ref{E:bound_n_L}) on the largest element of $\Pi_L^{\times}$, we see that, with $C_M$ being (another) constant depending on $M$, and $\epsilon<1$ assumed, we have
\begin{equation}\label{E:EstimateOmegaL}
\left| \Omega_L(\epsilon)  \right| \le C_M  2^{\epsilon M L} L^M \; E_L
\end{equation}
and similarly
\begin{equation}\label{E:EstimateGammaN}
\left| \Gamma(N, \epsilon)  \right| \le C_M  2^{\epsilon M L^*(N)} L^*(N)^M \; E_{L^*(N)} \; .
\end{equation}
Using the estimate (\ref{E:EstimatedOmegaSum}) on $E_L$, we substitute into (\ref{E:EstimateOmegaL}), (\ref{E:EstimateGammaN}), and we have
\begin{align*}
\left| \Omega_L(\epsilon)  \right| &\le C_M  2^{\epsilon M L} L^M \; 2^{ ( \rho_1 + \cdots + \rho_{M-2} + \rho_M ) L }  L \\
\left| \Gamma(N, \epsilon)  \right| &\le C_M  2^{\epsilon M L^*(N)} L^*(N)^M \; 2^{ ( \rho_1 + \cdots + \rho_{M-2} + \rho_M ) L^*(N)}  L^*(N) \; .
\end{align*}
Recall that (\ref{E:Main_A_N}) tells us
$$
A_N(\epsilon) = \sum_{L < L^*(N)} \beta_L 2^{-X L} \Omega_L(\epsilon) \;\;\; + \;\;\; \beta_{L^*(N)} \alpha_{L^*(N)} \Gamma(N, \epsilon) \; .
$$
We define $\beta_L = \beta_L(\epsilon)$ to satisfy
$$
\beta_L \Omega_L(\epsilon) = (-1)^{d_L} \left| \Omega_L(\epsilon)  \right| 
$$
where $d_L= d_L(\epsilon)$ is not yet specified.  This means we have
$$
A_N(\epsilon) = \sum_{L < L^*(N)}   (-1)^{d_L} 2^{-X L} \left| \Omega_L(\epsilon)  \right| \;\;\;  + \;\;\; \beta_{L^*(N)} \alpha_{L^*(N)} \Gamma(N, \epsilon) \; .
$$
Leaving aside the $L^*(N)$ term for the moment, recall the following result from infinite series: If $a_n \ge 0$ and $a_n \rightarrow 0$, then there exists some choice of signs $d_n$ such that the partial sums $\sum_{n \le N} (-1)^{d_n} a_n$ converge (to some unspecified value) as $N \rightarrow \infty$.  This means that, if (for some particular value of $\epsilon$)
$$
2^{-X L} \left| \Omega_L(\epsilon)  \right|  \rightarrow 0 \;\; \text{as } L \rightarrow \infty
$$
then there exists a choice of signs $d_L$ such that $\sum_{L < L^*(N)}   (-1)^{d_L} 2^{-X L} \left| \Omega_L(\epsilon)  \right| $ converges as $N \rightarrow \infty$.  Using the inequality above, we have
\begin{align*}
2^{-X L} \left| \Omega_L(\epsilon)  \right| &\le 2^{-X L} L^{-(M+2)} C_M  2^{\epsilon M L} L^M \; 2^{ ( \rho_1 + \cdots + \rho_{M-2} + \rho_M ) L } L \\
&= C_M 2^{ [ \; ( \rho_1 + \cdots + \rho_{M-2} + \rho_M) - X + \epsilon M \;]L}  L^{-1} \; .
\end{align*}
We see that, if $\epsilon$ satisfies
\begin{equation}\label{E:RhoEpsilonConditionFromSigmaC}
( \rho_1 + \cdots + \rho_{M-2} + \rho_M) - X + \epsilon M = 0
\end{equation}
then $2^{-X L} \left| \Omega_L(\epsilon)  \right|  \rightarrow 0$.  Additionally, since $\alpha_{L^*(N)} \Gamma(N,\epsilon)$ is bounded by the same quantity as above (with $L^*(N)$ substituted for $L$), if (\ref{E:RhoEpsilonConditionFromSigmaC}) is satisfied then we also have
$$
\alpha_{L^*(N)} \Gamma(N,\epsilon) \rightarrow 0
$$
and therefore there will be a choice of signs $\{d_L\}$ (i.e. a choice of $\{ \beta_L \}$) such that $A_N(\epsilon)$ converges as $N \rightarrow \infty$.

Let us choose
$$
\epsilon = \frac{-1}{M} \big[ \; ( \rho_1 + \cdots + \rho_{M-2} + \rho_M) - X \; \big]
$$
This means (\ref{E:RhoEpsilonConditionFromSigmaC}) is satisfied, so as long as this $\epsilon$ is greater than zero,  we can choose $\{ \beta_L \}$ such that $f(s)$ converges at $s = - \epsilon$ on the negative real axis, and so we have proved

\begin{proposition}\label{P:sigma_c}
Let $f$ be the Dirichlet series defined by  (\ref{E:MainSeries}) and (\ref{E:Define_a_n}).  $f$ satisfies
$$
\sigma_c \le \frac{1}{M} \big( ( \rho_1 + \cdots + \rho_{M-2} + \rho_M) - X \big)
$$
if the quantity on the right hand side is negative (it would not be of interest otherwise).
\end{proposition}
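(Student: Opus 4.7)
The plan is to produce, for each $\epsilon$ strictly smaller than $\epsilon_* := (X - (\rho_1 + \cdots + \rho_{M-2} + \rho_M))/M$, a choice of unit-modulus coefficients $\{\beta_L\}$ (depending on $\epsilon$) for which the partial sums $A_N(\epsilon) = \sum_{n \le N} a_n n^{\epsilon}$ converge as $N \to \infty$. Convergence at $s = -\epsilon$ forces $\sigma_c \le -\epsilon$, and letting $\epsilon \uparrow \epsilon_*$ yields $\sigma_c \le -\epsilon_*$, the asserted bound. The decomposition of $A_N(\epsilon)$ is the key structural ingredient: because every element of $\Pi_{L_1}^{\times}$ lies below every element of $\Pi_{L_2}^{\times}$ when $L_1 < L_2$, one may split
\begin{equation*}
A_N(\epsilon) = \sum_{L < L^*(N)} \beta_L 2^{-XL} \Omega_L(\epsilon) + \beta_{L^*(N)} 2^{-X L^*(N)} \Gamma(N,\epsilon),
\end{equation*}
where $\Omega_L$ and $\Gamma$ are the complete and truncated inner sums (\ref{E:BigOmega})--(\ref{E:BigGamma}).

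Next I would control $|\Omega_L(\epsilon)|$ by summation by parts on the block $\Pi_L^{\times}$ ordered by size. Telescoping bounds the total variation of $n^{\epsilon}$ by $2(\max \Pi_L^{\times})^{\epsilon}$, while the cumulative sums of $\omega_{r_2}^{i_1 i_2} \cdots \omega_{r_M}^{i_{M-1} i_M}$ are governed by the key estimate (\ref{E:EstimatedOmegaSum}); combined with the size bound (\ref{E:bound_n_L}), one obtains, for $\epsilon \le 1$,
\begin{equation*}
2^{-XL} |\Omega_L(\epsilon)| \le \tilde{C}_M \, 2^{[(\rho_1 + \cdots + \rho_{M-2} + \rho_M) + \epsilon M - X] L} L^{M+1},
\end{equation*}
and the same bound for $\Gamma(N,\epsilon)$ with $L^*(N)$ in place of $L$. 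When $\epsilon < \epsilon_*$, the bracketed exponent of $2$ is strictly negative, so both quantities tend to $0$ as $L, L^*(N) \to \infty$. Now I exploit the remaining freedom in $\beta_L$: using the elementary fact that any real sequence tending to $0$ can be made into a convergent series by a suitable choice of signs $\eta_L \in \{\pm 1\}$, pick $\beta_L$ with $\beta_L \Omega_L(\epsilon) = \eta_L |\Omega_L(\epsilon)|$ so that $\sum_L \eta_L 2^{-XL} |\Omega_L(\epsilon)|$ converges; the $\Gamma$-term is bounded in modulus by the same vanishing quantity and therefore contributes $0$ in the limit, so $A_N(\epsilon)$ converges.

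The genuinely hard input here is the cancellation estimate (\ref{E:EstimatedOmegaSum}) established in Section \ref{S:General_InnerSum}; once that is in hand, the remainder is bookkeeping driven by the dyadic ordering property plus the clever use of sign freedom in $\{\beta_L\}$. The one subtlety demanding care is that $\beta_L$ depends on $\epsilon$, so one cannot expect a single sequence $\{\beta_L\}$ to yield convergence at $s = -\epsilon_*$ itself; the conclusion $\sigma_c \le -\epsilon_*$ must be reached by approaching $\epsilon_*$ from below along a sequence of admissible $(\epsilon, \{\beta_L\})$ pairs.
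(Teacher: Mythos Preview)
Your outline follows the paper's argument closely: the block decomposition of $A_N(\epsilon)$, summation by parts against the cumulative-sum estimate (\ref{E:EstimatedOmegaSum}), the resulting bound $2^{-XL}|\Omega_L(\epsilon)| \le \tilde C_M\, 2^{[(\rho_1+\cdots+\rho_{M-2}+\rho_M)+\epsilon M - X]L} L^{M+1}$, and the use of the sign freedom in $\beta_L$ are all exactly what the paper does. The only structural difference is that the paper works directly at the critical value $\epsilon=\epsilon_*$ (where the bracketed exponent vanishes) and then needs the sign-choosing lemma because the block terms merely tend to zero, whereas you work strictly below $\epsilon_*$ and then attempt to pass to the limit.

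That final limiting step, however, is a genuine gap as written. For each $\epsilon<\epsilon_*$ you produce a \emph{different} sequence $\{\beta_L(\epsilon)\}$, hence a different Dirichlet series $f^{(\epsilon)}$, with $\sigma_c(f^{(\epsilon)})\le -\epsilon$. Nothing in this argument yields a \emph{single} $f$ with $\sigma_c(f)\le -\epsilon_*$; ``approaching $\epsilon_*$ from below along a sequence of $(\epsilon,\{\beta_L\})$ pairs'' does not establish the proposition, which is a statement about one fixed series. The repair is easy and in fact makes your route cleaner than the paper's: when $\epsilon<\epsilon_*$ the bracketed exponent is strictly negative, so $2^{-XL}|\Omega_L(\epsilon)|$ decays geometrically and the block sum $\sum_L \beta_L 2^{-XL}\Omega_L(\epsilon)$ converges \emph{absolutely} for \emph{every} choice of unimodular $\beta_L$, while the tail term $2^{-XL^*(N)}\Gamma(N,\epsilon)\to 0$ for the same reason. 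Hence one fixed (arbitrary) $\{\beta_L\}$ already gives convergence of $A_N(\epsilon)$ for all $\epsilon<\epsilon_*$ simultaneously, so $\sigma_c\le -\epsilon$ for all such $\epsilon$ and therefore $\sigma_c\le -\epsilon_*$. In this regime the sign-choosing trick is unnecessary; it is only needed (as in the paper) if one insists on proving convergence at $s=-\epsilon_*$ itself.
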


\section{Results}\label{S:results}

Examining propositions \ref{P:sigma_b_upperbound}, \ref{P:sigma_a}, \ref{P:sigma_b_lowerbound} and \ref{P:sigma_c}, we see that with $X = ( \rho_1 + \cdots + \rho_M ) \frac{M+1}{2M}$, the requirement
$$
X \le (1/2) ( \rho_1 + \cdots + \rho_M + 1 )
$$
from proposition \ref{P:sigma_b_upperbound} is satisfied, and so we have a Dirichlet series $f$ which satisfies
\begin{align*}
\sigma_c &\le \frac{1}{2 M^2} \big[ (M-1) ( \rho_1 + \cdots + \rho_{M-2} + \rho_M) - (M+1) \rho_{M-1} \big] \\
\sigma_b &\ge 0 \\
\sigma_b &\le \frac{1}{2M} \left( 1 - \frac{ \rho_1 + \cdots + \rho_M }{M}  \right)  \\
\sigma_a &\ge \frac{M-1}{2 M^2} (\rho_1 + \cdots + \rho_M) 
\end{align*}
as long as the bound on $\sigma_c$ is less than zero, i.e.
$$
(M-1) ( \rho_1 + \cdots + \rho_{M-2} + \rho_M) - (M+1) \rho_{M-1} < 0 \; .
$$

Proving the results stated in the introduction, for $M=2$ and $M=3$, is now a matter of arithmetic:

With $M=2$, and $\rho_1, \rho_2 = 1$, we have $\sigma_b = 0$, $\sigma_a \ge 1/4$, and $\sigma_c \le -1/4$.

With $M=3$, we can set $\rho_2 = \rho_3 = 1$ and then
\begin{align*}
\sigma_a &\ge \frac{1}{9} ( \rho_1 + 2 ) \\
\sigma_b &\in [0 , \frac{1}{18} (1 - \rho_1 ) ] \\
\sigma_c &\le \frac{1}{9} (\rho_1 - 1) \; .
\end{align*}


\begin{thebibliography}{100}

\bibitem{Hille_Bohnenblust} H.F. Bohnenblust and E. Hille, ``On the Absolute Convergence of Dirichlet Series'', \emph{Annals of Mathematics}, 1931, volume 32 no. 3, 600-622

\bibitem{Bohr_1913_2} H. Bohr, ``{\"{U}}ber die {B}edeutung der {P}otenzreihen unendlich vieler {V}ariabeln in der {T}heorie der {D}irichletschen reihen $\sum a_n / n^s \; $'', \emph{Nachrichten von der Gesellschaft der Wissenschaften zu Göttingen, Mathematisch-Physikalische Klasse}, 1913, 441-488

\bibitem{Bohr_1913_1} H. Bohr, ``{\"{U}}ber die gleichm{\"{a}}ssige {K}onvergenz {D}irichletscher {R}eihen'', \emph{Journal f\"{u}r die reine und angewandte Mathematik}, 1913, no. 143, 203-211

\bibitem{Defant_Survey} A. Defant and U. Schwarting, ``Bohr's Radii and Strips - a microscopic and macroscopic view'', \emph{Note di Matematica}, 2011, volume 31 no. 1, 87-102

\bibitem{Defant_et_al_Hypercontractive} A. Defant and L. Frerick and J. Ortega-Cerd\`a and M. Ouna\"{\i}es and K. Seip, ``The Bohnenblust–Hille inequality for homogeneous polynomials is hypercontractive'', \emph{Annals of Mathematics}, 2011, volume 174 no. 1, 485-497


\bibitem{Diniz_et_al} D Diniz and G.A. Mu\~noz-Fernandez and D. Pellegrino and J.B. Seoane-Sep\'ulveda, ``The asymptotic growth of the constants in the Bohnenblust-Hille inequality is optimal'', \emph{Preprint}, arXiv:1108.1550.

\bibitem{Hardy_Wright} G.H. Hardy and E.M. Wright, \emph{An Introduction to the Theory of Numbers}, 5th edition, 1978, Clarendon Press, Oxford 

\bibitem{hartman} P. Hartman, ``On Dirichlet Series Involving Random Coefficients", \emph{American Journal of Mathematics}, 1939, volume 61 no. 4, 955-964

\bibitem{Littlewood} J.E. Littlewood, ``On Bounded Bilinear Forms in an Infinite Number of Variables'', \emph{Quarterly Journal of Mathematics}, 1930, volume 1, 164-174

\bibitem{Maurizi_Queffelec} B. Maurizi and H. Queff\'elec, ``Some Remarks on the Algebra of Bounded Dirichlet Series'', \emph{Journal of Fourier Analysis and Applications}, 2010, volume 16 no. 5, 676-692

\bibitem{Queffelec_OldAndNew} H. Queff\'elec, ``H. Bohr's Vision of ordinary Dirichlet series; old and new results'', \emph{Journal of Analysis}, 1995, no.3, 43-60



\end{thebibliography}
\end{document}